\DeclareSymbolFont{cyrletters}{OT2}{wncyr}{m}{n}
\DeclareMathSymbol{\Sha}{\mathalpha}{cyrletters}{"58}
\font\teneufm=eufm10 \font\seveneufm=eufm7
\font\fiveeufm=eufm5
\let\goth\mathfrak
\def\cT{\mathcal T}
\def\GG{\mathbb{G}}
\def\gG{\goth G}
\def\gX{\goth X}
\def\1{\mbox{\bf 1}}
\DeclareMathOperator{\Aut}{Aut}
\DeclareMathOperator{\Out}{Out}
\DeclareMathOperator{\SO}{\rm SO}
\DeclareMathOperator{\cd}{cd}
\newcommand{\calB}{{\mathcal B}} 
\newcommand{\calU}{{\mathcal U}} 
\newtheorem{stheorem}{Theorem}[section]
\newtheorem{sclaim}[stheorem]{Claim}
\newtheorem{scorollary}[stheorem]{Corollary}
\newtheorem{slemma}[stheorem]{Lemma}
\newtheorem{sproposition}[stheorem]{Proposition}
\newtheorem{sremarks}[stheorem]{Remarks}
\theoremstyle{definition}
\numberwithin{equation}{section}
\def\ZZ{\mathbb{Z}}
\def\cA{\mathcal{A}}
\def\cO{\mathcal{O}}
\def\cP{\mathcal{P}}
\def\cT{\mathcal{T}}
\def\etale{\text{\rm \'etale}}
\def\2int{\mathop{2\int}\nolimits}
\def\ua{\underline{a }}
\def\Stab{\mathop{\rm Stab}\nolimits}
\def\Gal{\mathop{\rm Gal}\nolimits}
\def\Aut{\text{\rm{Aut}}}
\def\Out{\text{\rm{Out}}}
\def\resp.{\mathop{\rm resp.}\nolimits}
\def\limproj{\mathop{\oalign{lim\cr
\hidewidth$\longleftarrow$\hidewidth\cr}}}
\def\lgr{\longrightarrow}
\font\math=cmmi10
\def\varpi{\hbox{\math\char'44}}
\def\simlgr{\buildrel\sim\over\lgr}
\def\pa{\S\kern.15em }
\def\un{\uppercase\expandafter{\romannumeral 1}}
\def\deux{\uppercase\expandafter{\romannumeral 2}}
\def\trois{\uppercase\expandafter{\romannumeral 3}}
\def\quatre{\uppercase\expandafter{\romannumeral 4}}
\def\cinq{\uppercase\expandafter{\romannumeral 5}}
\def\six{\uppercase\expandafter{\romannumeral 6}}
\def\hfl#1#2#3{\smash{\mathop{\hbox to#3{\rightarrowfill}}\limits
^{\scriptstyle#1}_{\scriptstyle#2}}}
\def\gfl#1#2#3{\smash{\mathop{\hbox to#3{\leftarrowfill}}\limits
^{\scriptstyle#1}_{\scriptstyle#2}}}
\begin{document}

\title[Tamely-ramified extension]{Semi-simple groups that are quasi-split over a tamely-ramified extension}

\author[P.\ Gille]{Philippe Gille}
\address{Univ Lyon, Universit\'e Claude Bernard Lyon 1, CNRS UMR 5208, Institut Camille Jordan, 43 blvd. du 11 novembre 1918, F-69622 Villeurbanne cedex, France.}
\thanks{The  author is supported by the project ANR Geolie, ANR-15-CE40-0012, (The French National Research Agency).}

\date{\today}

\begin{abstract} 
Let  $K$ be a  discretly henselian field whose residue field is separably closed.
Answering a question raised by G. Prasad, we show that a semisimple $K$--group
$G$ is quasi-split if and only if it quasi--splits after a finite tamely 
ramified extension of $K$.
 
\end{abstract}

\maketitle

\bigskip

\bigskip

\noindent{\bf Keywords:} Linear algebraic groups, Galois cohomology, Bruhat-Tits theory.

\medskip

\noindent{\bf MSC: 20G10, 20G25}.

\bigskip

\section{Introduction} 
Let $K$ be  a discretly valued henselian   field
with valuation ring $\cO$ and residue field $k$. We denote by $K_{nr}$
the maximal unramified extension of $K$ and by $K_t$ its maximal tamely 
ramified extension.
If $G/K$ is a semisimple simply connected groups, 
Bruhat-Tits theory is  available in the sense of \cite{Pr1,Pr2}
and  the Galois cohomology set $H^1(K_{nr}/K,G)$ can be computed 
in terms of the Galois cohomology of special fibers of Bruhat-Tits 
group schemes \cite{BT3}. This permits to compute
$H^1(K,G)$ when the residue field $k$ is perfect.

On the other hand,
if $k$ is not perfect, ``wild cohomology classes'' occur, that is
$H^1(K_{t},G)$ is non-trivial. Such examples appear for example
in the study of bad unipotent elements of semisimple algebraic groups \cite{G2002}.
Under some restrictions on $G$,  we would like to show that  $H^1(K_t/K_{nr},G)$ vanishes
(see Corollary \ref{cor_BT}).
This is related to the following quasi-splitness  result.

\begin{stheorem}\label{main} Let $G$ be a semisimple simply connected 
$K$--group which is quasi-split over $K_t$.

\smallskip

(1) If the residue field $k$ is separably closed, then $G$ is quasi-split.

\smallskip

(2) $G \times_K K_{nr}$ is quasi-split.

\end{stheorem}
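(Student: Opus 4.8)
The plan is to deduce (2) from (1), and to prove (1) by reducing to the absolutely simple anisotropic case and then invoking Bruhat--Tits theory over the tame extension. First observe that $K_{nr}$ is again discretely valued and henselian, its residue field $k_s$ is separably closed, and its maximal tamely ramified extension is $K_t$; since $G\times_K K_{nr}$ is quasi-split over $K_t$, statement (1) applied over $K_{nr}$ yields (2). So assume from now on that $k$ is separably closed. Then $K=K_{nr}$, the group $\Gamma:=\Gal(K_t/K)$ is procyclic of pro-order prime to $p:=\ch(k)$, and $\mu_m(K)\cong\mu_m(k)$ is cyclic of order $m$ for every $m$ prime to $p$ (Hensel). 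As the scheme of Borel subgroups of $G$ is of finite type over $K$, the hypothesis gives a finite subextension over which $G$ is quasi-split; every such extension is $K_n:=K(\pi^{1/n})$ for some $n$ prime to $p$, and we may enlarge $n$ at will. Here $K_n/K$ is cyclic and totally (tamely) ramified of degree $n$, with group $\Gamma_n\cong\ZZ/n$ and residue field $k$.

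Next I would reduce everything to the assertion $(\star)$: \emph{an absolutely almost simple simply connected group over a discretely henselian field with separably closed residue field, which is anisotropic over that field, does not become quasi-split over a finite tamely ramified extension.} To see that $(\star)$ implies (1): writing $G=\prod_i R_{L_i/K}(G_i)$ with $L_i/K$ finite separable (hence totally ramified, with residue field again $k$) and $G_i$ absolutely almost simple simply connected over $L_i$, and using that $R_{L/K}(H)$ is quasi-split over $K$ if and only if $H$ is quasi-split over $L$, we may replace $K$ by the $L_i$ and assume $G$ absolutely almost simple. If $G$ were not quasi-split over $K$, choose a maximal $K$-split torus $S$, put $M=Z_G(S)$ (a Levi $K$-subgroup) and $N=\mathcal{D}(M)$ (the anisotropic kernel), a nontrivial semisimple $K$-group, anisotropic over $K$. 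Over $K_n$, $M\times_K K_n=Z_{G\times_K K_n}(S\times_K K_n)$ is a Levi subgroup of the quasi-split group $G\times_K K_n$, hence quasi-split, so $N\times_K K_n$ is quasi-split. Replacing $N$ by its simply connected cover and decomposing it once more as a product of Weil restrictions of absolutely almost simple simply connected groups (over discretely henselian fields with separably closed residue field, over finite tamely ramified extensions of which these groups become quasi-split), one contradicts $(\star)$; hence $G$ is quasi-split over $K$.

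The core is $(\star)$. Let $G$ be absolutely almost simple simply connected, anisotropic over $K$, and quasi-split over $K_n$ as above. Since $G$ is anisotropic over $K$, its reduced Bruhat--Tits building $\mathcal{B}(G,K)$ is a single point $x_0$, and the maximal reductive quotient $\overline{\mathcal{G}}^{\mathrm{red}}$ of the special fibre $\overline{\mathcal{G}}$ of the Bruhat--Tits group scheme $\mathcal{G}/\mathcal{O}_K$ attached to $x_0$ is anisotropic over $k$; as $k$ is separably closed this forces $\overline{\mathcal{G}}^{\mathrm{red}}=1$, i.e.\ $\overline{\mathcal{G}}$ is unipotent. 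On the other hand, by Galois descent of buildings for tamely ramified extensions one has $\mathcal{B}(G,K)=\mathcal{B}(G,K_n)^{\Gamma_n}$, so $x_0$ is the unique $\Gamma_n$-fixed point of $\mathcal{B}(G,K_n)$; and by the tame descent of parahoric group schemes the parahoric $\mathcal{G}/\mathcal{O}_K$ is recovered as $\Gamma_n$-invariants of the parahoric $\mathcal{H}/\mathcal{O}_{K_n}$ at $x_0$, these invariants being well behaved because $\#\Gamma_n=n$ is prime to $p$. A dévissage killing the (split) unipotent radicals (whose higher $\Gamma_n$-cohomology vanishes, $\#\Gamma_n$ being invertible in $k$) then produces a surjection $\overline{\mathcal{G}}^{\mathrm{red}}\twoheadrightarrow(\overline{\mathcal{H}}^{\mathrm{red}})^{\Gamma_n}$, where $\overline{\mathcal{H}}^{\mathrm{red}}$ is the reductive quotient of the special fibre of $\mathcal{H}$. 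But $G\times_K K_n$ is quasi-split and nontrivial, hence isotropic over $K_n$, so $\overline{\mathcal{H}}^{\mathrm{red}}$ is a nontrivial split reductive $k$-group, and $(\overline{\mathcal{H}}^{\mathrm{red}})^{\Gamma_n}$ is again nontrivial and reductive --- the invariants of a linearly reductive group acting on a nontrivial reductive group over a separably closed field being nontrivial, where for the residual central torus one uses that $\Gamma_n$ acts through permutations of the factors of the underlying étale algebra, hence with nonzero fixed space. Thus $\overline{\mathcal{G}}^{\mathrm{red}}\neq1$, contradicting the previous paragraph; this proves $(\star)$, and hence the theorem.

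The main obstacle is the Bruhat--Tits input used in the last step: the identifications $\mathcal{B}(G,K)=\mathcal{B}(G,K_n)^{\Gamma_n}$ and the recovery of $\mathcal{G}$ from $\mathcal{H}$ by $\Gamma_n$-invariants, together with the compatibility of these invariants with passing to special fibres and reductive quotients, and the control of the special fibre $\overline{\mathcal{H}}$ of the parahoric at the single point $x_0$ --- in particular that its reductive quotient, with its $\Gamma_n$-action, survives taking $\Gamma_n$-invariants. All of this rests essentially on the extension being tamely ramified, so that $\#\Gamma_n$ is invertible and fixed points of the relevant actions remain smooth and reductive, and on the residue field being separably closed, so that anisotropic reductive $k$-groups are trivial and all residual reductive quotients are split. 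Carrying out this descent precisely --- the tamely ramified refinement of Bruhat--Tits's unramified descent --- is where the real work lies.
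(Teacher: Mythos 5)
Your overall architecture differs substantially from the paper's: you reduce to the statement that an anisotropic absolutely almost simple simply connected group cannot become quasi-split over a tame extension, and try to prove this by comparing the reductive quotients of the special fibres of parahorics over $K$ and over $K_n$ via Galois descent of buildings. The paper instead reduces (via the structure of $\Aut(G_0)$ and a Springer-type theorem on $0$-cycles for the non-$E_8$ types) to proving $H^1(K_t/K,G_0)=1$ for the \emph{split} group $G_0$, and proves that by applying the Bruhat--Tits fixed point theorem to the \emph{twisted} $\Gamma$-action on the building of $G_{0,L}$, pushing the fixed point into the standard apartment, and then showing the cocycle is cohomologous to one with values in $\mathcal{T}_x(\mathcal{O}_L)$ (Hilbert 90 plus vanishing of $\Gamma$-cohomology of pro-unipotent groups). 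The crucial feature of that route is that it only ever uses Bruhat--Tits theory for a split group over $L$, where the theory is available unconditionally.

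This points to the genuine gap in your proposal: the core of your argument for $(\star)$ presupposes the full apparatus of Bruhat--Tits theory for the (possibly anisotropic) group $G$ over $K$ --- the existence of $\mathcal{B}(G,K)$ and of the parahoric $\mathcal{O}_K$-scheme $\mathcal{G}$ whose reductive quotient detects the $K$-split rank, the identification $\mathcal{B}(G,K)=\mathcal{B}(G,K_n)^{\Gamma_n}$, and the recovery of $\mathcal{G}$ from $\mathcal{H}$ by taking invariants. When $k$ is separably closed we have $K=K_{nr}$, and Bruhat--Tits theory for a general $G/K$ is built by unramified descent from $K_{nr}$, which requires knowing that $G$ is quasi-split over $K_{nr}$ --- i.e.\ precisely statement (2) of the theorem. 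The substitute, ``tamely-ramified descent'' from $K_n$, is exactly the content of Prasad's paper cited as \cite{Pr2}, in which the present quasi-splitness statement appears as Theorem~4.4 and is an essential ingredient of (not a corollary of) the descent; so invoking that machinery here is circular. Even granting it, two steps are left unjustified: the existence and direction of the surjection $\overline{\mathcal{G}}^{\mathrm{red}}\twoheadrightarrow(\overline{\mathcal{H}}^{\mathrm{red}})^{\Gamma_n}$ (identity components matter, since parahorics have connected fibres, and a merely finite nontrivial fixed-point group would yield no contradiction), and the claim that the $\Gamma_n$-fixed points of $\overline{\mathcal{H}}^{\mathrm{red}}$ are of positive dimension and contain a split torus lifting to a $K$-split torus of $G$ --- which is the actual heart of the matter and is asserted rather than proved. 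You acknowledge at the end that ``carrying out this descent precisely is where the real work lies''; unfortunately that deferred work is not a routine refinement but is essentially equivalent to the theorem itself, whereas the paper's twisted-action argument sidesteps it entirely.
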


This theorem answers a question raised by Gopal Prasad who found 
another proof  by reduction  to the inner case of type $A$ \cite[th. 4.4]{Pr2}.
Our first observation is that the result is quite simple to establish 
under the following additional hypothesis:

 \smallskip

$(*)$ {\it If the variety of Borel subgroups of $G$ carries a $0$-cycle of degree one, 
then it has a $K$-rational point.}

\medskip

Property $(*)$ holds away of $E_8$ (section 2). It is an open question
if $(*)$ holds for groups of type $E_8$. For the $E_8$ case (and actually for a strongly inner $K$--group $G$) of Theorem \ref{main}, our proof is a Galois cohomology argument using Bruhat-Tits buildings (section 3).

We can make at this stage some remarks about the statement. Since $K_{nr}$ is a discretly valued henselian   field
with residue field $k_s$,  we observe  that  (1) implies (2). Also a weak approximation argument \cite[prop. 3.5.2]{GGMB}
reduces  to the   complete case. 
If the residue field $k$ is separably closed of characteristic zero, we have then $\cd(K)= 1$,
so that the result follows from Steinberg's theorem \cite[\S 4.2, cor. 1]{Se1}. In other words, 
the main case to address is that of characteristic exponent  $p> 1$.

\medskip

\noindent{\bf Acknowledgements.}
We are grateful to G. Prasad for raising this interesting question and also for 
fruitful discussions.

\section{The variety of Borel subgroups and $0$--cycle of degree one}

Let $k$ be a field, let $k_s$ be a separable closure and let 
$\Gal(k_s/k)$ be the absolute Galois group of $k$.
Let $q$ be a nonsingular quadratic form.
A celebrated result of Springer states that the Witt index of
$q$ is insensitive to odd degree field extensions. In particular
the property to have a maximal Witt index is insensible to odd degree extensions and 
this can be rephrased by saying that the algebraic group $\SO(q)$
is quasi-split iff it is quasi-split over an odd degree field extension of $k$.
This fact generalizes for all semisimple groups without type $E_8$.

\begin{stheorem}\label{th_springer} Let $G$ be a semisimple 
algebraic $k$-group without quotient of type $E_8$.
Let $k_1, \dots , k_r$ be finite field extensions of $k$ with
coprime degrees. Then 
$G$ is quasi-split if and only if 
$G_{k_i}$ is quasi-split for $i=1,...,r$.
\end{stheorem}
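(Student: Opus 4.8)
The plan is to prove the nontrivial implication by a chain of reductions ending in an appeal to property $(*)$. One direction is immediate: a Borel $k$-subgroup $B$ of $G$ base-changes to a Borel $k_i$-subgroup of $G_{k_i}$, so if $G$ is quasi-split then so is each $G_{k_i}$. Conversely, assume all the $G_{k_i}$ are quasi-split. First I would reduce to the case where $G$ is absolutely simple adjoint. Since the center lies in every Borel subgroup, the preimage of a Borel subgroup under a central isogeny is again a Borel subgroup, so a semisimple group is quasi-split if and only if its adjoint quotient is; thus we may take $G$ adjoint, say $G=\prod_j R_{l_j/k}(H_j)$ with $H_j$ absolutely simple adjoint over a finite separable extension $l_j/k$. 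Quasi-splitness is compatible with finite products and with Weil restriction, and the coprimality hypothesis persists: for fixed $j$, writing $l_j\otimes_k k_i=\prod_m F_{ijm}$ as a product of fields, the $F_{ijm}$ are finite extensions of $l_j$ with $\sum_m[F_{ijm}:l_j]=[k_i:k]$, so the degrees $[F_{ijm}:l_j]$, as $(i,m)$ vary, have gcd dividing $\gcd_i[k_i:k]=1$. Hence it suffices to treat $G$ absolutely simple adjoint over $k$ with no quotient of type $E_8$, admitting finite extensions of coprime degrees over which it becomes quasi-split.

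Let $X$ denote the variety of Borel subgroups of $G$; it is smooth and projective, and $G$ is quasi-split precisely when $X(k)\neq\emptyset$. A $k_i$-point of $X$ yields a closed point of $X$ of degree dividing $[k_i:k]$; as the $[k_i:k]$ are coprime, a suitable $\mathbb Z$-linear combination of these closed points is a $0$-cycle of degree $1$ on $X$. Property $(*)$, which holds because $G$ has no quotient of type $E_8$, then forces $X(k)\neq\emptyset$, i.e. $G$ is quasi-split. This proves the theorem modulo $(*)$, and it is the verification of $(*)$ for the groups at hand that carries the real weight.

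For $(*)$ I would argue case by case over the Tits classification, in the spirit of Springer's theorem on odd-degree extensions. A $0$-cycle of degree one on $X$ already trivializes the Tits algebras of $G$: for every closed point $P$ of $X$ one has $X(\kappa(P))\neq\emptyset$, so $G_{\kappa(P)}$ is quasi-split and hence every Tits algebra of $G$ is split by $\kappa(P)$; since the index of a central simple algebra is the gcd of the degrees of its finite splitting fields, and $X$ carries a $0$-cycle of degree one, every Tits algebra of $G$ is split. For the classical types this disposes of the symplectic case outright and reduces the orthogonal and unitary cases to the maximality of the Witt index of an associated quadratic, resp. hermitian, form; that maximality is then inherited from a closed point of $X$ of odd degree --- one exists, since a $0$-cycle of degree one has an odd-degree summand --- via Springer's theorem for quadratic forms and its hermitian analogue (Jacobson, Bayer--Fluckiger). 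For the exceptional types $G_2$, $F_4$, $E_6$ and $E_7$ I would invoke the structure theory of these groups (octonion algebras, Albert algebras, and the corresponding results of Springer, Petersson--Racine, Garibaldi and others) to reduce quasi-splitness to the triviality of the Tits algebras together with the vanishing of a finite list of cohomological invariants, each of prime-power torsion --- the invariants $f_3$, $f_5$, $g_3$ and their $E_6$- and $E_7$-analogues. An invariant of $\ell$-power order vanishes over $k$ once it vanishes over a closed point of $X$ of degree prime to $\ell$, and such a point occurs in any $0$-cycle of degree one, since the degrees of its summands cannot all be divisible by $\ell$. Together these inputs give $X(k)\neq\emptyset$.

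I expect the exceptional types, and chiefly $E_6$ and $E_7$, to be the main obstacle: there the reduction of quasi-splitness to a manageable list of invariants rests on the full structure theory of exceptional groups and of Jordan algebras rather than on elementary quadratic form theory. This is also exactly why $E_8$ is excluded: an $E_8$-group has trivial center, so all its Tits algebras are automatically split and a $0$-cycle of degree one on the Borel variety conveys no algebraic information; it is not known whether quasi-splitness of an $E_8$-group is detected by its known cohomological invariants --- equivalently, whether $(*)$ holds for type $E_8$ --- so that case genuinely remains open.
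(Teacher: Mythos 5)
Your reductions match the paper's almost step for step: quasi-splitness is insensitive to central isogenies, the decomposition $G=\prod_j R_{l_j/k}(H_j)$, the bookkeeping showing that coprimality survives base change through the \'etale algebras $l_j\otimes_k k_i$, and the passage from the $k_i$-points to a $0$-cycle of degree one on the variety of Borel subgroups. The divergence is in how property $(*)$ is established. The paper does not run the case-by-case analysis over $k$ at all: it first lifts $G$ to a semisimple group scheme over a Cohen ring with residue field $k$ (smoothness of $\Aut(G_0)$ and Hensel's lemma to lift the group, projectivity and smoothness of the scheme of Borel subgroups to transport points back and forth), thereby reducing to $\mathrm{char}\,k=0$; there it notes that a $0$-cycle of degree one kills the Tits class by restriction--corestriction, so $G$ is a \emph{strong} inner form of $G^q$, and Lemma \ref{lem_easy} converts the problem into the triviality of the kernel of $H^1(k,G^q)\to\prod_i H^1(k_i,G^q)$ for $G^q$ quasi-split simply connected, which is exactly \cite[Th.~0.4]{B}. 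Your plan instead inlines the content of that theorem (Springer and Bayer--Fluckiger--Lenstra for the classical types, Rost-invariant and Jordan-algebra arguments for the exceptional ones); in spirit this is how \cite{B}, \cite{BL} and \cite{Ga2001} are themselves proved, so the route is legitimate, but it concentrates all the weight in a step you only sketch, whereas the paper's reformulation lets it cite a single clean statement.

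The genuine gap is the missing reduction to characteristic zero. The theorem is asserted for an arbitrary field $k$, and the toolkit you invoke --- the hermitian analogue of Springer's theorem, the invariants $f_3,f_5,g_3$ of Albert algebras, mod-$2$ and mod-$3$ invariants for $E_6$ and $E_7$, trivial-kernel results for the Rost invariant --- is either unavailable or substantially more delicate in characteristics $2$ and $3$, which are precisely the characteristics where these structures are most fragile. As written, your case-by-case argument establishes the statement only away from small characteristic. The paper's Cohen-ring lifting is exactly the device that removes this restriction, and your argument needs it (or an equivalent) before the classification is launched. Two smaller remarks: the reduction of quasi-splitness of $E_6$- and $E_7$-forms to ``trivial Tits algebras plus a finite list of invariants'' requires the trivial-kernel theorems of \cite{Ga2001} and is not merely ``structure theory,'' so it needs a precise citation rather than a gesture; and the Rost invariant is $6$- or $12$-torsion rather than of prime-power order, so your transfer argument should be applied to each primary component separately --- which works, since a $0$-cycle of degree one contains, for each prime $\ell$, a closed point of degree prime to $\ell$, but the statement as you phrase it (``each of prime-power torsion'') is not literally what the invariants satisfy.
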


The proof is far to be uniform  hence gathers several 
contributions \cite{BL,Ga2001}. Note that the split version (in the absolutely
almost simple case) is \cite[th. C]{G1997}. 
We remind the reader that a semisimple $k$-group $G$ is 
isomorphic to an inner twist of a quasi-split group $G^q$ and
that such a $G^q$ is unique up to isomorphism.
Denoting by $G^q_{ad}$ the adjoint quotient of $G^q$, 
this means that there exists a Galois 
cocycle $z :\Gal(k_s/k) \to G^q_{ad}(k_s)$
such that $G$ is isomorphic to ${_zG^q}$.
We denote by $\pi: G^{sc,q}\to
G_{ad}^q$ the simply connected cover  of $G^q$.
Then ${_zG^{sc,q}}$ is the  simply connected cover  of ${_zG^q}\cong G$.

\newpage

\begin{slemma}\label{lem_easy} The following are equivalent:
\smallskip

(i)  $G$ is quasi-split;

(ii) $[z] =1 \in H^1(k,G^q_{ad})$;

\smallskip

\noindent If furthermore $[z]= \pi_*[z^{sc}]$ for a $1$-cocycle
$z^{sc}: \Gal(k_s/k) \to G^{sc,q}(k_s)$, (i) and (ii)  also equivalent to 

\smallskip

(iii) $[z^{sc}]=1 \in H^1(k,G^{sc,q})$.

\end{slemma}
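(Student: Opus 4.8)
The plan is to split the proof into the equivalence (i)$\Leftrightarrow$(ii) and, under the extra hypothesis, the equivalence of these with (iii). For the first I would use the uniqueness of the quasi-split member of an inner class together with the structure of $\Aut(G^q)$; for the second, a connecting-map computation that reduces to the quasi-triviality of a maximally split maximal torus of $G^{sc,q}$.

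For (i)$\Leftrightarrow$(ii): the implication (ii)$\Rightarrow$(i) is clear, since $[z]=1$ gives $G\cong{}_1G^q=G^q$. For (i)$\Rightarrow$(ii), note that $G\cong{}_zG^q$ with $z$ valued in $G^q_{ad}$ makes $G$ an inner form of $G^q$; if $G$ is quasi-split, the uniqueness of the quasi-split inner form gives $G\cong G^q$ over $k$. Composing a $k$-isomorphism $f\colon{}_zG^q\xrightarrow{\sim}G^q$ with the canonical identification ${}_zG^q_{k_s}=G^q_{k_s}$ yields $\varphi\in\Aut(G^q)(k_s)$ with $z_\sigma=\varphi^{-1}\,{}^\sigma\!\varphi$ for all $\sigma\in\Gal(k_s/k)$. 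Since $G^q$ is quasi-split it admits a $k$-pinning, so $\Aut(G^q)=G^q_{ad}\rtimes\Gamma$ with $\Gamma=\Out(G^q)$ a finite constant group; writing $\varphi=g\gamma$ with $g\in G^q_{ad}(k_s)$, $\gamma\in\Gamma$, and using ${}^\sigma\!\gamma=\gamma$, I would compute $z_\sigma=\gamma^{-1}g^{-1}\,{}^\sigma\! g\,\gamma=h^{-1}\,{}^\sigma\! h$ with $h=\gamma^{-1}g\gamma\in G^q_{ad}(k_s)$, whence $[z]=1$.

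For (iii)$\Leftrightarrow$(i)--(ii) under the extra hypothesis: (iii)$\Rightarrow$(ii) is immediate, since $[z]=\pi_*[z^{sc}]=\pi_*(1)=1$. For the converse I would let $Z=\ker\pi$ (a finite central subgroup scheme) and use the exact sequence of pointed sets (fppf cohomology for $Z$ if $Z$ is not smooth)
\[
G^q_{ad}(k)\xrightarrow{\partial}H^1(k,Z)\xrightarrow{i_*}H^1(k,G^{sc,q})\xrightarrow{\pi_*}H^1(k,G^q_{ad}).
\]
Since $\pi_*[z^{sc}]=[z]=1$, exactness at $H^1(k,G^{sc,q})$ puts $[z^{sc}]$ in the image of $i_*$, so it suffices to show $i_*$ is trivial, equivalently — by exactness at $H^1(k,Z)$ — that $\partial$ is surjective. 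To that end, choose a maximal $k$-split torus $S\subset G^{sc,q}$ and let $T$ be its centralizer, a maximal torus ($G^{sc,q}$ being quasi-split, $T$ lies in a $k$-Borel and $Z\subset T$). The character module $\widehat T$ is the weight lattice $P$ with $\Gal(k_s/k)$ acting through diagram automorphisms; these permute the fundamental weights, which form a $\mathbb{Z}$-basis of $P$, so $T$ is a quasi-trivial torus and $H^1(k,T)=0$. Functoriality of connecting maps for the morphism of central extensions $(1\to Z\to T\to T/Z\to1)\to(1\to Z\to G^{sc,q}\xrightarrow{\pi}G^q_{ad}\to1)$ then shows that $\partial$ restricted to $(T/Z)(k)\subset G^q_{ad}(k)$ is the connecting map $(T/Z)(k)\to H^1(k,Z)$ of the first sequence, which is onto because the next term $H^1(k,T)$ vanishes. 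Hence $\partial$ is surjective, $i_*$ is trivial, and $[z^{sc}]=1$.

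I expect the one genuinely non-formal step to be the surjectivity of $\partial$ — equivalently the quasi-triviality of the maximally split maximal torus of $G^{sc,q}$, which is exactly where the hypotheses that $G^{sc,q}$ be quasi-split and simply connected are needed — whereas everything else is torsor bookkeeping together with the uniqueness of the quasi-split group in an inner class recalled just before the lemma.
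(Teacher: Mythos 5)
Your proof is correct, and its skeleton matches the paper's: one direction of each equivalence is formal, and the substance lies in two trivial-kernel statements. The difference is that the paper handles both by citation, while you prove them. For (i)$\Leftrightarrow$(ii) the paper simply invokes the fact that $H^1(k,G^q_{ad})\to H^1(k,\Aut(G^q))$ has trivial kernel because the sequence $1\to G^q_{ad}\to\Aut(G^q)\to\Out(G^q)\to 1$ splits; your cocycle computation $z_\sigma=h^{-1}\,{}^\sigma h$ with $h=\gamma^{-1}g\gamma$ is exactly the proof of that fact, written out. For (ii)$\Leftrightarrow$(iii) the paper cites the trivial kernel of $H^1(k,G^{sc,q})\to H^1(k,G^{q}_{ad})$ from \cite[lemme III.2.6]{G1997}; your argument via the central extension $1\to Z\to G^{sc,q}\to G^q_{ad}\to 1$, surjectivity of the boundary $\partial$, and quasi-triviality of the maximally split maximal torus of a quasi-split simply connected group is precisely the standard proof of that lemma, so you gain self-containedness at the cost of length. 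Two small points. First, your justification ``${}^\sigma\gamma=\gamma$ because $\Out(G^q)$ is a finite constant group'' is not quite right in general: for a quasi-split trialitarian form of $D_4$ the Galois action on $\Out(G^q)(k_s)\cong S_3$ is by conjugation through the $*$-action and need not be trivial. What you actually need --- and what is true --- is that the $\Out$-component $\bar\varphi$ of $\varphi$ is Galois-fixed, which follows from $z_\sigma=\varphi^{-1}\,{}^\sigma\varphi\in G^q_{ad}(k_s)$ projecting to $1=\bar\varphi^{-1}\,{}^\sigma\bar\varphi$; then $\gamma=s(\bar\varphi)$ is fixed because the section $s$ is defined over $k$. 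Second, you correctly flag that $Z=\ker\pi$ may be non-smooth in positive characteristic, so the exact sequence must be taken in fppf cohomology; since $G^{sc,q}$ and $G^q_{ad}$ are smooth, their $H^1$'s agree with Galois cohomology and the argument goes through.
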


\begin{proof} The isomorphism class of $G$ is encoded by the image of
$[z]$ under  the  map $
 H^1(k,G^q_{ad})  \to H^1(k,\Aut(G^q)).$
The right handside map has trivial kernel since
the exact sequence $1 \to G^q_{ad} \to \Aut(G^q) \to 
\Out(G^q) \to 1$ is split (\cite[XXIV.3.10]{SGA3}
or \cite[31.4]{KMRT}), whence the implication $(ii) \Longrightarrow (i)$.
The reverse inclusion $(i) \Longrightarrow (ii)$ is obvious.

Now we assume that $z$ lifts to a $1$-cocycle $z^{sc}$.
The implication $(iii) \Longrightarrow (ii)$ is then  obvious.
The point is that the map $H^1(k,G^{sc,q}) \to H^1(k,G^{q})$ 
 has trivial kernel \cite[III.2.6]{G1997}
whence the implication $(ii) \Longrightarrow (iii)$.
\end{proof}

 We proceed to the proof of Theorem \ref{th_springer}.

\begin{proof} 
Let $X$ be the variety of Borel subgroups of $G$ \cite[XXII.5.8.3]{SGA3}, 
a projective $k$--variety. The $k$--group $G$ is quasi-split iff $X$ has a $k$-rational point.
Thus we have to prove that if $X$ has a $0$-cycle of degree one,
then $X$ has a $k$-point.

Without loss of generality, we can assume that 
$G$ is simply connected. According to \cite[XXIV.5]{SGA3} we have that $G \simlgr \prod_{j=1,..,s} R_{l_j/k}(G_j)$
where $G_j$ is an absolutely almost simple simply connected group defined over 
a finite separable field extension $l_j$ of $k$ (the notation $R_{l_j/k}(G_j)$ stands as usual for the
Weil restriction to $k_k$ to $k$).
The variety of Borel subgroup $X$ of $G$ is then isomorphic
to $\prod_{j=1,..,s} R_{l_j/k}(X_j)$
where $X_j$ is the $l_j$-variety of Borel subgroups of $G_j$.

\smallskip

\noindent{\it Reduction to the absolutely almost simple case.}
Our assumption is that  $X(k_i) \not = \emptyset$ for $i=1,..,r$
hence $X_j(k_i \otimes l_j) \not = \emptyset$ for $i=1,..,r$ and $j=1,..,s$.
Since $l_j/k$ is separable, $k_i \otimes l_j$ is an \'etale $l_j$-algebra
for $i=1,..,r$ and it follows that $X_j$ carries a $0$-cycle of degree one.
If we know to prove the case of each $X_j$, we have $X_j(k_j) \not = \emptyset$
hence $X(k) \not = \emptyset$.
From now on, we assume that $G$ is absolutely almost simple.
We denote by $G_0$ the Chevalley group over $\ZZ$
such that $G$ is a twisted form of $G_0 \times_{\ZZ} k$.

\smallskip

\noindent{\it Reduction to the characteristic zero case.}
If $k$ is of  characteristic $p>0$, 
let $\cO$ be a Cohen ring for the residue field $k$, that is 
a complete discrete valuation ring such that its fraction field
$K$ is of characteristic zero and for which $p$ is an uniformizing parameter 
\cite[IX.41]{BAC78}.
The isomorphism class of $G$ is encoded by a Galois cohomology
class in $H^1(k, \Aut(G_0))$. Since $\Aut(G_0)$ is a smooth affine
$\ZZ$-group scheme \cite[XXIV.1.3]{SGA3}, we can use  Hensel's lemma
$H^1_{\etale}(\cO, \Aut(G_0)) \simlgr H^1(k, \Aut(G_0))$ \cite[XXIV.8.1]{SGA3}
so that $G$ lifts in a  semisimple simply connected group scheme $\gG$ over $\cO$.
Let $\gX$ be the $\cO$--scheme of Borel subgroups of $\gG$ \cite[XXII.5.8.3]{SGA3}. It 
is smooth and projective. 
For $i=1,..,r$, let $K_i$ be an unramified field  extension of
$K$ of degree $[k_i:k]$ and of residue field $k_i$.
Denoting by $\cO_i$ its valuation ring, we consider the 
maps
$$
\gX(K_i) = \gX(\cO_i) \to \hskip-3mm \to X(k_i).
$$
The left equality come from the projectivity and the right
surjectivity is Hensel's lemma. It follows that 
$\gX(K_i)\not = \emptyset$
for $i=1,...,r$ so that $\gX_K$ has a $0$-cycle of degree one.
Assuming the result in the characteristic zero case, it follows that
$\gX(K) = \gX(O) \not=\emptyset$ whence $X(k) \not=\emptyset$.
We may assume from now that  $k$ is of characteristic zero.
We denote by $\mu$ the center of $G$ and by $t_G \in H^2(k, \mu)$
the Tits class of $G$ \cite[\S 31]{KMRT}.
Since the Tits class of the quasi-split form $G^q$ of $G$ is zero, 
 the classical  restriction-corestriction argument yields
 that $t_G=0$. In other words
 $G$ is a strong inner form of its quasi-split form $G^q$.
It means that there exists a Galois cocycle $z$ with value in
$G^q(k_s)$ such that $G \cong {_zG^q}$, that is
the twist by inner conjugation of $G$ by $z$. 
Lemma \ref{lem_easy} shows that our problem
is rephrased  in Serre's question \cite[\S 2.4]{Se2} on the triviality
of the kernel of the map
$$
H^1(k,G^q) \to \prod\limits_{i=1,...,r} H^1(k_i,G^q)
$$
That kernel is indeed trivial in our case \cite[Th. 0.4]{B}, whence the result.
\end{proof}

We remind  the reader that one can associate to a semisimple $k$-group
$G$ its set $S(G)$ of torsion primes which depends only of its Cartan-Killing
type \cite[\S 2.2]{Se2}.
Since an algebraic group splits after an extension
of degree whose primary 
factors belong to $S(G)$ \cite{T1992}, we get  the following refinement.

\begin{scorollary}\label{cor_springer} Let $G$ be a semisimple 
algebraic $k$-group without quotient of type $E_8$.
Let $k_1, \dots , k_r$ be finite field extensions of $k$ such that 
$\mathrm{g.c.d.}( [k_1:k], \dots, [k_r:k])$ is prime to $S(G)$.
 Then  $G$ is quasi-split if and only if 
$G_{k_i}$ is quasi-split for $i=1,...,r$.
\end{scorollary}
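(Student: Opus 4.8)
The plan is to deduce Corollary~\ref{cor_springer} from Theorem~\ref{th_springer} by adjoining to the given family of field extensions a splitting field of $G$ of controlled degree. One implication is immediate: if $G$ is quasi-split then so is $G_{k_i}$ for every $i$. Conversely, assume that $G_{k_i}$ is quasi-split for $i=1,\dots,r$ and put $d=\mathrm{g.c.d.}([k_1:k],\dots,[k_r:k])$, so that by hypothesis no prime in $S(G)$ divides $d$.

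Since $S(G)$ depends only on the Cartan--Killing type of $G$, the result of \cite{T1992} provides a finite field extension $L/k$ (which may be taken separable) such that $G_L$ is split, hence quasi-split, and such that every prime factor of $[L:k]$ lies in $S(G)$. Now consider the enlarged family $k_1,\dots,k_r,L$. A prime $q$ dividing $\mathrm{g.c.d.}([k_1:k],\dots,[k_r:k],[L:k])=\mathrm{g.c.d.}(d,[L:k])$ would divide $[L:k]$, hence belong to $S(G)$, while also dividing $d$; this contradicts the hypothesis, so the degrees $[k_1:k],\dots,[k_r:k],[L:k]$ are coprime. As $G$ has no quotient of type $E_8$ and $G_{k_1},\dots,G_{k_r},G_L$ are all quasi-split, Theorem~\ref{th_springer} applied to this family yields that $G$ is quasi-split.

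This is a pure reduction, so I do not anticipate any real obstacle. The only ingredient beyond Theorem~\ref{th_springer} is the existence, recorded in \cite{T1992}, of a splitting field of $G$ whose degree is $S(G)$-primary; the key point is simply that the coprimality of the degrees is preserved when we adjoin such a field, precisely because $d$ is prime to $S(G)$.
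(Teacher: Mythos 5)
Your argument is correct and is exactly the paper's intended proof: the paper deduces the corollary from Theorem~\ref{th_springer} in one sentence by invoking the splitting field of \cite{T1992} whose degree has all prime factors in $S(G)$, which is precisely the extension $L$ you adjoin to the family. Your verification that the enlarged family has coprime degrees is the (routine) detail the paper leaves implicit.
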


Lemma  \ref{lem_easy} together with the Corollary  implies
the following statement.

\begin{scorollary}\label{cor_springer2} Let $G$ be a semisimple
simply connected quasi-split algebraic $k$-group without factors
of type $E_8$.
Let $k_1, \dots , k_r$ be finite field extensions of $k$ such that 
$\mathrm{g.c.d.}( [k_1:k], \dots, [k_r:k])$ is prime to $S(G)$.
Then the maps
$$
H^1(k,G) \to \prod\limits_{i=1,...,r} H^1(k_i,G) \quad \hbox{and}
$$
$$
H^1(k,G_{ad}) \to \prod\limits_{i=1,...,r} H^1(k_i,G_{ad})
$$
have trivial kernels. \qed
\end{scorollary}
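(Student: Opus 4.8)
The plan is to deduce both assertions from Corollary~\ref{cor_springer} and Lemma~\ref{lem_easy}, proving the statement for $G_{ad}$ first and then bootstrapping the one for $G$ from it. First I would treat the map $H^1(k,G_{ad})\to\prod_{i=1}^r H^1(k_i,G_{ad})$. Let $c$ be a $1$-cocycle whose class lies in its kernel, and form the inner twist $H:={_cG}$. By construction $H$ is an inner twist of the quasi-split group $G$, so $G$ is (a representative of) the quasi-split inner form of $H$; thus Lemma~\ref{lem_easy}, applied to $H$ with the cocycle $z=c$, gives that $H$ is quasi-split if and only if $[c]=1\in H^1(k,G_{ad})$. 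Now $[c]$ becomes trivial over each $k_i$, so $H_{k_i}\cong G_{k_i}$ is quasi-split for $i=1,\dots,r$; moreover $H$ has the Cartan--Killing type of $G$, hence no quotient of type $E_8$, and $S(H)=S(G)$ is prime to $\mathrm{g.c.d.}([k_1:k],\dots,[k_r:k])$. Corollary~\ref{cor_springer} then forces $H$ to be quasi-split, whence $[c]=1$.

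Next I would treat $H^1(k,G)\to\prod_{i=1}^r H^1(k_i,G)$. Since $G$ is simply connected and quasi-split we have $G=G^{sc,q}$ in the notation of Lemma~\ref{lem_easy}, with the canonical map $\pi\colon G\to G_{ad}$. Given $[c]\in H^1(k,G)$ in the kernel, push it to $[\bar c]:=\pi_*[c]\in H^1(k,G_{ad})$; by functoriality $[\bar c]$ lies in the kernel of the adjoint map, so the previous paragraph yields $[\bar c]=1$. Then the twist ${_{\bar c}G}$ is isomorphic to $G$, hence quasi-split, and as $c$ is a lift of $\bar c$ to a $G^{sc,q}$-valued cocycle, the equivalence of (i) and (iii) in Lemma~\ref{lem_easy} gives $[c]=1\in H^1(k,G)$.

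This is a formal chase through the two cited results, so I do not anticipate a genuine obstacle. The one point that needs (mild) care is the verification in the first step that the inner twist $H={_cG}$ meets the hypotheses of both Corollary~\ref{cor_springer} and Lemma~\ref{lem_easy}: that $G$ serves as a quasi-split inner form of $H$, that $H$ has no type $E_8$ quotient, and that $S(H)=S(G)$. All three follow at once from the fact that inner twisting leaves the root datum --- equivalently the Cartan--Killing type --- unchanged.
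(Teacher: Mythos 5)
Your argument is correct and is precisely the intended one: the paper offers no written proof beyond the remark that Lemma~\ref{lem_easy} combined with Corollary~\ref{cor_springer} yields the statement, and your two-step chase (twist by an adjoint cocycle in the kernel, apply Corollary~\ref{cor_springer} to the twisted form, then use the equivalences of Lemma~\ref{lem_easy}, including the trivial kernel of $H^1(k,G)\to H^1(k,G_{ad})$ for the simply connected case) is exactly that deduction spelled out. The points you flag for care --- that inner twisting preserves the Cartan--Killing type, hence the absence of $E_8$ factors and the set $S(G)$ --- are handled correctly.
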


We can proceed now on the proof of Theorem \ref{main}.(1)  away of $E_8$
since Theorem \ref{th_springer} shows that the condition $(*)$
is fullfilled in that case.

\medskip

\noindent{\it Proof of Theorem \ref{main}.(1) under  assumption $(*)$.}
Here $K$  is   a discretly valued henselian  field.
We are given a semisimple  $K$--group $G$ satisfying  assumption $(*)$, and such
that $G$ becomes quasi-split after a finite tamely ramified extension
$L/K$. Note that $[L:K]$ is prime to $p$.
We denote by $X$ the $K$--variety of Borel subgroups of $G$.
We want to show that $X(K) \not = \emptyset$.  We are then reduced to the following cases:
 
 \smallskip
 
 (i) $K$ is perfect and the absolute Galois group $\Gal(K_s/K)$ is a pro-$l$-group for a prime
 $l \not = p$.

 \smallskip
 
 (ii) $\Gal(K_s/K)$ is a pro-$p$-group.
 
 \smallskip
 
 \noindent By weak approximation \cite[prop. 3.5.2]{GGMB}, we may assume that 
 $K$ is complete.  Note that this operation does not change the 
 absolute Galois group ({\it ibid}, 3.5.1).
 
 \smallskip

 \noindent{\it Case (i):}
 We have that $\cd_l(K) \leq \cd_l(k)+1= 1$ 
 \cite[\S II.4.3]{Se1} so that $\cd(K) \leq 1$.
 Since $K$ is perfect, Steinberg's theorem \cite[\S 4.2, cor. 1]{Se1}
 yields that $G$ is quasi-split.

\smallskip

\noindent{\it Case (ii):} The extension $K$ has no proper tamely ramified
extension hence our assumption implies that $G$ is quasi-split.
\qed

\medskip

\begin{sremarks} {\rm 
\noindent $a)$ In case (i) of the proof, there is no 
 need to assume that $K$ is perfect and $l$  can be any
prime different from $p$. The point is that if $Gal(K_s/K)$ is a pro-$l$-group,
then the separable cohomological dimension of $K$ is less than or
equal to $1$, and then any semi-simple $K$-group is quasi-split, see
\cite[\S 1.7]{Pr1}

\smallskip

\noindent  $b)$ It an open question  whether a $k$--group of type $E_8$ is split 
if it is  split after coprime degree extensions $k_i/k$. 
A positive answer to this  question would imply Serre's vanishing conjecture II for 
groups of type $E_8$ \cite[\S 9.2]{G2017}.

\smallskip

\noindent  $c)$ Serre's injectivity question has a positive answer for an arbitrary classical group
(simply connected or adjoint)
 and holds for certain exceptional cases \cite{B}.
}
\end{sremarks}

\section{Cohomology and buildings}

The field $K$ is as in the introduction.

\begin{sproposition} \label{prop_BT} Assume that $k$ is separably closed.
Let $G$ be a split semisimple connected $K$-group.
 Then $H^1(K_t/K,G)=1$.
\end{sproposition}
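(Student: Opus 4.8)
The plan is to reduce to a $1$-cocycle on a finite cyclic tamely ramified extension, use the Bruhat--Tits building to push it (after a coboundary) into the $\mathcal O_L$-points of a parahoric, and then exploit that the residual group is split over the separably closed field $k$ in order to reduce it to a cocycle valued in a split maximal torus, where Hilbert~90 finishes.

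First the reductions. After passing to the completion (the introduction reduces to that case), $\mathcal O$ is a complete discrete valuation ring. Since $k$ is separably closed, $K_{nr}=K$ and $\Gal(K_t/K)$ is procyclic of order prime to the characteristic exponent $p$; writing $L=K(\pi^{1/n})$ for the finite subextensions, $C:=\Gal(L/K)=\langle\sigma\rangle$ is cyclic of order $n$ coprime to $p$, and $H^1(K_t/K,G)=\varinjlim_L H^1(\Gal(L/K),G(L))$ because a continuous cocycle factors through some $C$. So we fix $L$ and a cocycle $z\in Z^1(C,G(L))$, and must show $[z]=1$.

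Let $T\subset G$ be a maximal $K$-split torus and $x_0$ a point of its apartment $\mathcal A(T,K)\subset\mathcal B(G,K)$ hyperspecial over $K$. On the building $\mathcal B=\mathcal B(G,L)$ consider the twisted action $\gamma\ast x=z_\gamma\cdot{}^\gamma x$ of $C$; the $\ast$-orbit of $x_0$ is finite, so $C$ acts with bounded orbits and the Bruhat--Tits fixed point theorem provides a fixed point. Replacing $z$ by a cohomologous cocycle (conjugate by a $g\in G(L)$ carrying an apartment through the fixed point onto $\mathcal A(T,L)$, using transitivity of $G(L)$ on apartments) we may assume the fixed point $a$ lies in $\mathcal A(T,L)$. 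As $T$ is $K$-split, $C$ acts trivially on $\mathcal A(T,L)$, so ${}^\gamma a=a$ and the fixity condition $z_\gamma\cdot{}^\gamma a=a$ becomes
\[
z_\gamma\in\mathrm{Stab}_{G(L)}(a)=\widehat{\mathcal G}_a(\mathcal O_L)\qquad(\gamma\in C),
\]
where $\widehat{\mathcal G}_a$ is the smooth affine $\mathcal O$-group scheme attached by Bruhat--Tits to $a\in\mathcal A(T,K)$ (generic fibre $G$; its base change to $\mathcal O_L$ is the analogous group for $G_L$ at $a$, since $L/K$ is tame). The kernel $\mathcal U$ of the reduction $\widehat{\mathcal G}_a(\mathcal O_L)\to\widehat{\mathcal G}_a(k)$ is an inverse limit of $p$-torsion nilpotent groups, so $H^1(C,\mathcal U)=1$ for the prime-to-$p$ group $C$, and likewise after any twist; hence $[z]\in H^1(C,\widehat{\mathcal G}_a(\mathcal O_L))$ is determined by the reduced cocycle $\bar z\colon C\to\widehat{\mathcal G}_a(k)$. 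The identity component of $\widehat{\mathcal G}_a\otimes k$ is $M_a\ltimes U_a$ with $M_a$ reductive; over the separably closed field $k$ every reductive group is split, and $\bar z(\sigma)$, of order prime to $p$, is semisimple, hence (after conjugation, lifted back to $\widehat{\mathcal G}_a(\mathcal O_L)$) lies in a split maximal torus $\bar T_a\subset M_a$. Lift $\bar T_a$ to a split $\mathcal O$-torus $\mathcal T_a\subset\widehat{\mathcal G}_a$ and let $t\in\mathcal T_a(\mathcal O)$ be the Teichm\"uller lift of $\bar z(\sigma)$ (a root of unity of order prime to $p$), so $t^n=1$ and ${}^\sigma t=t$; then $\sigma\mapsto t$ defines a cocycle $z_T\in Z^1(C,T'(L))$ with $T':=\mathcal T_a\otimes K$ a $K$-split torus, and by Hilbert~90 $H^1(C,T'(L))\cong H^1(C,L^\times)^{\dim T'}=1$ (concretely $t=\lambda(\zeta_n)$ for a cocharacter $\lambda$, and $\lambda(\pi^{1/n})$ cobounds $z_T$). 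Thus $[z_T]=1$ in $H^1(C,G(L))$; since $z$ and $z_T$ have the same reduction $\bar z$, twisting by $z_T$ carries $z$ to a cocycle valued in $\mathcal U$ for the twisted $C$-action, which is cohomologically trivial, whence $[z]=[z_T]=1$.

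The step that needs care is that $\widehat{\mathcal G}_a\otimes k$ may be disconnected when $G$ is not simply connected, so that $\bar z(\sigma)$ need not lie in its identity component. One sidesteps this by reducing to the case where $G$ has simply connected derived group, where $\Omega$ acts freely on the facets of an apartment and every point of the building has connected stabiliser: replace $G$ by a $z$-extension $1\to S_0\to G'\to G\to 1$ with $S_0$ a split central torus and $G'_{der}$ simply connected. Since $S_0$ is split and $\Br(K_t/K)=0$ (the prime-to-$p$ part of $\Br(K)$ vanishes because $\cd_\ell(K)\le 1$ for $\ell\neq p$, and the $p$-primary part injects into $\Br(K_t)$ by restriction--corestriction against tame extensions), one has $H^1(K_t,S_0)=0$ and $H^2(K_t/K,S_0)=0$, so every class of $H^1(K_t/K,G)$ lifts to a $K_t$-split class of $H^1(K_t/K,G')$, and $H^1(K_t/K,G')=1$ by the argument above applied to $G'$. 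The remaining ingredients --- the fixed point theorem, transitivity on apartments, tame base change of Bruhat--Tits group schemes, and the vanishing of $H^1$ of the prime-to-$p$ group $C$ with coefficients in a $p$-torsion (pro-nilpotent) group --- are standard.
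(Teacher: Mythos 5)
Your overall strategy is the paper's own (reduce to a finite cyclic tame level, find a fixed point of the twisted action on the building, transport it into the apartment of the split torus, reduce modulo the pro-unipotent kernel of the parahoric, conjugate the residual class into a split maximal torus, finish with Hilbert 90), but there is a genuine gap at the central step. You treat the reduced cocycle $\bar z$ as if $C=\Gal(L/K)$ acted trivially on the special fibre of the parahoric at $a$: you assert that $\bar z(\sigma)$ has finite order prime to $p$ (hence is semisimple), you take a Teichm\"uller lift $t$ with $t^n=1$ and ${}^\sigma t=t$ so that $\sigma\mapsto t$ is a cocycle, and you conclude that $z$ and $z_T$ differ by something valued in $\mathcal U$. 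None of this is available unless $\bar z$ is a homomorphism, i.e.\ unless $C$ acts trivially on the residual group. Your justification --- that the $\mathcal O_L$-group at $a$ is the base change of an $\mathcal O_K$-group ``since $L/K$ is tame'' --- is not correct: the fixed point $a$ produced by the Bruhat--Tits fixed point theorem is an arbitrary $C$-fixed point of $\mathcal A(T)$, and under the ramified base change the affine root hyperplanes are rescaled by $n$, so the facet of $a$ over $L$ is in general strictly smaller than over $K$ and $\mathrm{Stab}_{G(L)}(a)$ is strictly larger than the $\mathcal O_L$-points of the $\mathcal O_K$-scheme (already for $\mathrm{SL}_2$ with $a$ the midpoint of an edge). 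What is true, and what the paper establishes, is that $C$ acts trivially on the maximal split torus $T_x$ of the residual reductive quotient $M_x$ and therefore acts on $M_x$ by inner automorphisms $\mathrm{int}(\phi(\sigma))$ for a homomorphism $\phi:C\to T_{x,ad}(k)$. The cocycle identity then gives $\underline{a}_{\sigma^j}=(\underline{a}_\sigma\phi(\sigma))^j\,\phi(\sigma)^{-j}$, so it is the product $\underline{a}_\sigma\phi(\sigma)$ in $M_{x,ad}(k)$ --- not $\bar z(\sigma)$ itself --- that has order dividing $n$ and can be conjugated into the standard torus, after which one checks that $m^{-1}a_\sigma\sigma(m)\in T_x(k)$. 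This computation is the technical heart of the proof and is missing from your argument.

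Two smaller points. First, the assertion that $H^1(C,\mathcal U)=1$ ``likewise after any twist'' is precisely what must be proved in order to identify the fibre of $H^1(C,P_x)\to H^1(C,M_x(k))$ over the torus-valued class with a single element; the paper isolates this as a Claim and proves it via the twisted pro-unipotent filtration (its Lemma on pro-unipotent groups). Second, your $z$-extension detour to handle possible disconnectedness of the special fibre replaces the semisimple group by a reductive one, for which point stabilizers in the reduced building are no longer bounded, so the fixed-point argument would itself need repair there; the paper avoids this by working throughout with the connected Bruhat--Tits group scheme $\mathcal P_x$ attached to $x$.
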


\begin{proof} 
We can reason at finite level and shall prove that 
$H^1(L/K,G)=1$ for a given 
  finite tamely ramified extension of $L/K$. We put $\Gamma=\Gal(L/K)$, it is
a cyclic group whose  order $n$ is  prime to the characteristic exponent $p$ of
$k$.

Let $\calB(G_L)$ be the Bruhat-Tits building of 
$G_L$.  It comes equipped with an action of $G(L) \rtimes \Gamma$ \cite[\S 4.2.12]{BT2}.
Let $(B,T)$ be a Killing couple for  $G$.
The split $K$--torus  $T$ defines an apartment $\cA(T_L)$
of $\calB(G_L)$ which is preserved by the action of 
$N_G(T)(L) \rtimes \Gamma$.

 We are given  a Galois cocycle $z :\Gamma \to G(L)$; it defines
 a section $u_z: \Gamma \to G(L) \rtimes \Gamma, \sigma \mapsto z_\sigma 
 \sigma $ of the projection map $G(L) \rtimes \Gamma \to\Gamma$.
 This provides an action of $\Gamma$ on $\calB(G_L)$ called the twisted action 
with respect to the cocycle $z$. The Bruhat-Tits fixed point theorem \cite[\S 3.2]{BT1} provides a point
 $y \in \calB(G_L)$ which is fixed by the twisted action.
 This point belongs to an apartment and since $G(L)$ acts transitively on the
 set of apartments of $\calB(G_L)$ there exists a suitable $g \in G(L)$
 such that $g^{-1}.y=x \in \cA(T_L)$. We observe that 
 $\cA(T_L)$ is fixed pointwise by $\Gamma$ (for the standard action), so that 
$x$ is fixed under $\Gamma$. We consider the equivalent cocycle
$z'_\sigma= g^{-1} \, z_\sigma \, \sigma(g)$ and compute

\begin{eqnarray} \nonumber
 z'_\sigma \, . \, x& = & z'_\sigma \, . \, \sigma(x) \\ \nonumber
 & = &(g^{-1} \, z_\sigma \, \sigma(g)) (\sigma(g^{-1}).\sigma(y) )\\ \nonumber
 & = & g^{-1} \, . \, \bigl( ( z_\sigma \sigma).y \bigr) \\ \nonumber
 &=&  g^{-1} \, . \, y  \qquad 
 \hbox{[$y$ is fixed under the twisted action]} \\ \nonumber
 &=& x .
\end{eqnarray}

\noindent Without loss of generality, we may assume that $z_\sigma.x=x$ for 
each $\sigma \in \Gamma$. We put $P_x= \Stab_{G(L)}(x)$;
since $x$ is fixed by $\Gamma$, the group $P_x$ is preserved by the 
action of $\Gamma$. Let $\cP_x$ the Bruhat-Tits $\cO_L$-group 
scheme attached to $x$. We have  $\cP_x(\cO_L)= P_x$  and 
we know that its special fiber $\cP_x \times_{\cO_L} k$
is smooth  connected, that its quotient $M_x= (\cP_x \times_{\cO_L} k)/ U_x$
by its split unipotent radical $U_x$ is split reductive.

An important  point is that the action of $\Gamma$ on $\cP_x(O_L)$
arises from a semilinear action of $\Gamma$ on the $O_L$--scheme $\cP_x$ as explained 
in the beginning of \S 2 of  \cite{Pr2}. It induces then a $k$--action 
of the group $\Gamma$ on $\cP_x \times_{\cO_L} k$, on $U_x$ and on $M_x$.
 Since $x$ belongs to $\cA(T_L)$, $\cP_x$ carries a natural maximal split $\cO_L$--torus $\cT_x$
and $T_x= \cT_x \times_{O_L} k$ is a maximal $k$--split torus
of $\cP_x \times_{O_L} k$ and its image in 
$M_x$ still denoted  by $T_x$ is a maximal $k$-split torus of
$M_x$. We observe that $\Gamma$ acts 
trivially on the $k$-torus $T_x$. But $T_{x}/C(M_x)= \Aut(M_x, id_{T_x})$
\cite[XXIV.2.11]{SGA3}, it follows that $\Gamma$ acts on $M_x$ 
by means of a group homomorphism $\phi: \Gamma \to T_{x,ad}(k)$
where $T_{x,ad}= T_x /C(M_x) \subseteq M_x/C(M_x)=M_{x,ad}$.
For each $m \in M_x(k)$, we have $\sigma(m)= \mathrm{int}(\phi(\sigma)). m$.

Now we take a generator $\sigma$ of $\Gamma$ and denote by
$a_\sigma$ the image in $M_x(k)$ of $z_\sigma \in P_x$
and by $\underline{a}_\sigma$ its image in $(M_x/C(M_x))(k)$.
The cocycle relation yields  $\ua_{\sigma^2}=\ua_\sigma \sigma(\ua_\sigma)=
\ua_\sigma \phi(\sigma)\ua_\sigma \phi(\sigma)^{-1}$
and more generally (observe that $\phi(\sigma)$ is fixed by $\Gamma$)
$$
\ua_{\sigma^j}=
\ua_\sigma \phi(\sigma)\ua_\sigma \phi(\sigma)^{-1}
\dots 
\dots \phi(\sigma)^{j-1} \ua_\sigma \phi(\sigma)^{1-j} \,
\phi(\sigma)^j \ua_\sigma \phi(\sigma)^{-j}
= \bigl(\ua_\sigma \phi(\sigma) \bigr)^j \, \phi(\sigma)^{-j}
$$
for $j=2,..,n$. Since $\phi(\sigma)^{n}=1$,  we get  the relation
$$
1= (\ua_\sigma \phi(\sigma))^n.
$$
Then $\underline{a}_\sigma \phi(\sigma)$ is an element of
order $n$ of $M_{x,ad}(k)$ so is semisimple.
But $k$ is separably closed so that $\underline{a}_\sigma \phi(\sigma)$
belongs to a maximal $k$-split torus $^m T_{x,ad}$ with $m \in M_x(k)$.
It follows that $m^{-1} \underline{a}_\sigma \phi(\sigma) m \in T_{ad,x}(k)$.
Since $\phi(\sigma)$ belongs to $T_{ad,x}(k)$, we have that
$m^{-1} \underline{a}_\sigma \phi(\sigma) m \phi(\sigma)^{-1} \in  T_{ad,x}(k)$
hence $m^{-1} \underline{a}_\sigma \sigma(m) \in T_{ad,x}(k)$.
It follows that   $m^{-1} a_\sigma \sigma(m) \in T_{x}(k)$.
Since the map $\cP_x(O_L) \to M_x(k)$ is surjective
 we can then assume that $a_\sigma \in T_x(k)$ without loss of generality
 so that  the cocycle $a$ takes value in $T_x(k)$. But $T_x(k)$ is a trivial $\Gamma$-module
so that $a$ is given by a homomorphism $f_a: \Gamma \to T_x(k)$.
This homomorphism lifts (uniquely) to a homomorphism $\widetilde f_a: \Gamma
\to \cT_x(\cO_L)^\Gamma$. The main technical step is 

\begin{sclaim} The fiber of  $H^1(\Gamma, P_x) \to H^1(\Gamma, M_x(k))$ 
at $[f_a]$ is $\bigl\{ [\widetilde f_a] \bigr\}$.
\end{sclaim}

\noindent Using the Claim, we have $[z]=[\widetilde f_a] \in H^1(\Gamma, P_x)$.
Its image in $H^1(\Gamma, G(L))$ belongs to the image of the map
$H^1(\Gamma, \cT_x(L)) \to H^1(\Gamma, G(L))$. But 
$0= H^1(\Gamma, \cT_x(L))$ (Hilbert 90 theorem) thus $[z]=1 \in 
H^1(\Gamma, G(L))$ as desired.

It remains to establish the Claim.
We put $P_x^\star=\ker(P_x \to M_x(k))$ and this group  
can be filtered by a $\Gamma$-stable decreasing filtration 
by normal subgroups ${\calU^{(i)}}_{i \geq 0}$ such that for each $i\leq j$
there is a split unipotent $k$-group 
$U^{(i,j)}$ equipped with an action of $\Gamma$
such that $\calU^{(i)}/ \calU^{(j)}=U^{(i,j)}(k)$
\cite[page 6]{Pr2}. 
We denote by ${_{\widetilde f_a}P_x}^\star$ the $\Gamma$--group
${P_x}^\star$ twisted by the cocycle $\widetilde f_a$;
there is a surjection $H^1(\Gamma, {_{\widetilde f_a}P_x}^\star)$
on the fiber at $[ f_a]$ of the map $H^1(\Gamma, P_x) \to H^1(\Gamma, M_x(k))$ \cite[I.5.5, cor. 2]{Se1}.
It is then enough to show that $H^1(\Gamma, {_{\widetilde f_a}P_x}^\star)=1$.
It happens fortunately that the filtration is stable under
the adjoint action of the image of $\widetilde f_a$.
By using the pro-unipotent $k$-group $U=\limproj U^{(0,j)}$
and  Lemma \ref{lem_unipotent} in the next subsection, we have that
$H^1(\Gamma, {_{\widetilde f_a}P_x}^\star)=H^1\bigl(\Gamma, ({_{\widetilde f_a}U})(k) \bigr)=1$.
Since $H^1\bigl(\Gamma, ({_{\widetilde f_a}U})(k) \bigr)$ maps onto the kernel of
fiber of  $H^1(\Gamma, P_x) \to H^1(\Gamma, M_x(k))$
at $[\widetilde f_a]$ \cite[\S I.5.5, cor. 2]{Se1},  we conclude that the Claim is established.
\end{proof}

\smallskip

This permits to complete the proof of Theorem \ref{main}.

\medskip

\noindent{\it Proof of Theorem \ref{main}.(1)}. 
By the usual reductions, the question boils down to 
the semisimple simply connected case and even the  absolutely almost $K$--simple semisimple simply connected case.
Taking into account the cases established in   section 2, 
it remains to deal with the case of type $E_8$. Denote by $G_0$ the split group of type $E_8$, 
 we have $G_0= \Aut(G_0)$. It follows that $G \cong {_zG_0}$
 with $[z] \in H^1(K,G_0)$. Our assumption is that 
 $G_{K_{t}}$ is quasi-split so that $[z] \in H^1(K_{t}/K,G_0)$.
 Proposition \ref{prop_BT} states that $H^1(K_{t}/K,G_0)=1$, whence $G$ is split.
 \hfill\hfill $\square$

\medskip

We record the following cohomological application.

\begin{scorollary}\label{cor_BT}
 Let $G$ be a  semisimple  algebraic $K$--group which is quasi-split over $K_{t}$. 
 We assume that $G$ is simply connected or adjoint.
 Then $H^1(K_t /K_{nr},G)=1$.
\end{scorollary}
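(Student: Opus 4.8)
The plan is to deduce Corollary \ref{cor_BT} from Theorem \ref{main} together with Proposition \ref{prop_BT}, exploiting the fact that $K_{nr}$ is itself a discretely valued henselian field whose residue field $k_s$ is separably closed. First I would replace $K$ by $K_{nr}$: the extension $K_t/K_{nr}$ is the maximal tamely ramified extension of $K_{nr}$ (the tame extensions of $K$ and of $K_{nr}$ differ only by the unramified part), so the cohomology set $H^1(K_t/K_{nr},G)$ is $H^1$ of the absolute tame Galois group of $K_{nr}$ with values in $G(K_t)$. Thus it suffices to prove: if $F$ is a discretely valued henselian field with separably closed residue field and $G/F$ is semisimple simply connected or adjoint and quasi-split over $F_t$, then $H^1(F_t/F,G)=1$.

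For the simply connected case, I would argue as follows. Let $[z]\in H^1(F_t/F,G)$. By Theorem \ref{main}.(1), since $G$ is quasi-split over $F_t$ and the residue field of $F$ is separably closed, $G$ is already quasi-split over $F$; moreover $G$ is simply connected. Hence $G$ is obtained from a quasi-split simply connected group by an inner twist, but in fact I want to reduce to the \emph{split} case: since the residue field is separably closed, the $*$-action / outer form data is controlled by unramified cohomology, and a quasi-split simply connected $F$-group with separably closed residue field is split (its Dynkin diagram action factors through the absolute Galois group of the residue field, which is trivial). So $G$ is split, and Proposition \ref{prop_BT} applies directly to give $H^1(F_t/F,G)=1$. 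Applying this with $F=K_{nr}$ finishes the simply connected case.

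For the adjoint case, I would pass to the simply connected cover $\pi\colon G^{sc}\to G$ and use the exact sequence $1\to\mu\to G^{sc}\to G\to 1$, where $\mu$ is the (finite, of order prime to $p$ once we are over $K_{nr}$, or at least étale after the tame extension) center. Over $F=K_{nr}$ the boundary map $H^1(F_t/F,G)\to H^2(F_t/F,\mu)$ has target built from the cohomology of a finite group scheme; since $k_s$ is separably closed and the relevant torsion is tame, $H^2$ of the tame Galois group with coefficients in $\mu$ vanishes (Hilbert 90 for the torus containing $\mu$, plus the fact that the tame Galois group has cohomological dimension $1$ away from $p$, and the $p$-part of $\mu$ is trivial over $K_{nr}$ for the groups in question — one can also invoke \cite[\S 1.7]{Pr1} as in Remark (a)). Hence every class in $H^1(F_t/F,G)$ lifts to $H^1(F_t/F,G^{sc})$, which is trivial by the simply connected case; so $H^1(F_t/F,G)=1$ as well.

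The main obstacle I anticipate is the bookkeeping around the center $\mu$ in the adjoint case when $p>1$: one must check that the relevant $H^2$ contribution genuinely vanishes over $K_{nr}$, i.e. that no ``wild'' part of the center survives, and that the twisting/lifting argument is compatible with the restriction of scalars decomposition used in the absolutely-almost-simple reduction. This is where I would be most careful to cite the precise statements from Bruhat--Tits theory and from \cite{Pr1}, rather than the softer reduction steps, which are routine applications of Theorem \ref{main} and Proposition \ref{prop_BT}.
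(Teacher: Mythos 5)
There is a genuine gap in your simply connected case: you assert that a quasi-split semisimple group over $K_{nr}$ must be split because its Dynkin diagram action ``factors through the absolute Galois group of the residue field, which is trivial''. That is false. The absolute Galois group of $K_{nr}$ is the inertia group, which is far from trivial (its tame quotient is $\prod_{\ell \neq p}\ZZ_\ell$), and the $*$-action on the Dynkin diagram can factor through a ramified extension: for instance the quasi-split group $\SU_3$ attached to a ramified separable quadratic extension $L/K_{nr}$ is quasi-split but not split, and when $p=2$ it need not even split over $K_t$. Consequently you cannot reduce to the split case and invoke Proposition \ref{prop_BT}, which is stated and proved only for split groups (its proof uses a split maximal torus of $G$ over $K$ and the fact that $\Gamma$ fixes the corresponding apartment pointwise). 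Your adjoint case has a second, independent problem: the center $\mu$ of $G^{sc}$ can perfectly well have order divisible by $p$ over $K_{nr}$ (e.g.\ $G=\PGL_p$), in which case $\mu$ is non-smooth in residue characteristic $p$ and the sequence $1\to\mu\to G^{sc}\to G_{ad}\to 1$ only yields a cohomology sequence in flat cohomology; the vanishing of the relevant $H^2$ is asserted but not established, and the claim that ``the $p$-part of $\mu$ is trivial over $K_{nr}$'' is not true in general.

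The paper's proof avoids both issues with a twisting argument that never touches $\mu$ and never needs the group to be split. After reducing to $G$ adjoint (using that $H^1(K,G)\to H^1(K,G_{ad})$ has trivial kernel for $G$ simply connected, \cite[lemme III.2.6]{G1997}) and quasi-split (Theorem \ref{main}), one takes $[z]\in H^1(K_t/K_{nr},G)$ and forms the inner twist $G'={_zG}$ over $K_{nr}$; since $G'_{K_t}\cong G_{K_t}$ is quasi-split, Theorem \ref{main} shows $G'$ is quasi-split over $K_{nr}$, hence isomorphic to $G$. Therefore $[z]$ dies in $H^1(K_{nr},\Aut(G))$, and the kernel of $H^1(K_{nr},G_{ad})\to H^1(K_{nr},\Aut(G))$ is trivial because $1\to G_{ad}\to\Aut(G)\to\Out(G)\to 1$ splits \cite[XXIV.3.10]{SGA3}. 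Your instinct to work over $K_{nr}$ and to lean on Theorem \ref{main} is right; the missing idea is to apply it to the twisted form $G'$ rather than trying to compute $H^1$ of $G$ directly via Proposition \ref{prop_BT}.
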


\begin{proof} Theorem \ref{main} permits to assume that $G$ is quasi-split. 
We denote by $\pi: G \to G_{ad}$ the adjoint quotient of $G$.
Since the map $H^1(K,G) \to  H^1(K,G_{ad})$ has trivial kernel \cite[lemme III.2.6]{G1997},
we can assume that $G$ is adjoint. 
Let $[z] \in H^1(K_t/K_{nr},G)$. We consider the twisted $K_{nr}$--form $G'= {_zG}$ of $G$.
Since $G'_{K_t}$ is isomorphic to $G_{K_t}$, $G'_{K_t}$ is quasi-split and Theorem
\ref{main} shows that $G'$ is quasi-split hence isomorphic to $G$. It means that $z$ belongs to the 
kernel of  the map $\mathrm{int}_*: H^1(K,G) \to H^1(K,\Aut(G))$.
But the exact sequence of $K$--groups $1 \to G \xrightarrow{\mathrm{int}} \Aut(G) \to \Out(G) \to 1$ 
splits \cite[XXIV.3.10]{SGA3} so that the above kernel is trivial. Thus $[z]=1 \in  H^1(K_{nr},G)$.
\end{proof}

\section{Appendix: Galois cohomology of pro-unipotent groups}

Let $k$ be a separably closed  field.
Let $U$ be a pro-unipotent algebraic $k$-group equipped with 
an action of a finite group $\Gamma$, that is $U$ admits a decreasing filtration
$U=U_0 \supset U_1 \supset U_2 \supset \cdots$ by normal
pro unipotent $k$--groups which are stabilized by $\Gamma$ and such that
$U_i/U_{i+1}$ is an unipotent algebraic $k$-group for $i=1,...,n$.

\begin{slemma}\label{lem_unipotent}
 We assume that $\sharp \Gamma$ is invertible in $k$ and that
 the $U_i/U_{i+1}$'s are smooth and connected. Then $H^1(\Gamma, U(k))=1$.
\end{slemma}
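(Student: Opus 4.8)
The plan is to proceed by dévissage along the filtration, reducing the pro-unipotent case to the case of a single smooth connected unipotent $k$-group with $\Gamma$-action, and then to handle that case by a cohomological-dimension argument together with the fact that $\#\Gamma$ is invertible in $k$. First I would reduce to finite level: since $U = \limproj U/U_i$ and each $U_i/U_{i+1}$ is smooth connected unipotent, Galois cohomology commutes with the inverse limit here (the transition maps $H^1(\Gamma,(U/U_{i+1})(k)) \to H^1(\Gamma,(U/U_i)(k))$ have fibers controlled by $H^1(\Gamma, (U_i/U_{i+1})(k))$, and one checks a Mittag-Leffler-type condition), so it suffices to prove $H^1(\Gamma, N(k)) = 1$ for every smooth connected unipotent $k$-group $N$ equipped with a $\Gamma$-action.

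For a single smooth connected unipotent $k$-group $N$ over the separably closed field $k$, I would argue by induction on $\dim N$. A smooth connected unipotent group over any field admits a composition series whose successive quotients are forms of $\mathbb{G}_a$; the subtlety is to arrange such a series to be $\Gamma$-stable. The cleanest route is to use the $\Gamma$-equivariant structure: one can take the last nontrivial term of the descending central series, or more robustly pass to $N^{p}$-type subgroups, to extract a $\Gamma$-stable central subgroup $Z \subseteq N$ which is smooth connected of smaller dimension, with $N/Z$ also smooth connected unipotent. The short exact sequence $1 \to Z \to N \to N/Z \to 1$ of $\Gamma$-groups gives an exact sequence of pointed sets
\[
H^1(\Gamma, Z(k)) \to H^1(\Gamma, N(k)) \to H^1(\Gamma, (N/Z)(k)),
\]
and since $Z$ is central in $N$ one even has that the fiber over the trivial class of $H^1(\Gamma,(N/Z)(k))$ is a quotient of $H^1(\Gamma, Z(k))$. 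By the inductive hypothesis both end terms vanish, hence $H^1(\Gamma, N(k)) = 1$. So everything reduces to the base case $N = \mathbb{G}_a$ (or a $k$-form of it, but over a separably closed field $\mathbb{G}_a$ has no nontrivial forms of the additive group as a connected group — any such form is $\mathbb{G}_a$ itself since $k$ is separably closed and $\mathrm{Aut}$ considerations; even if one worries about wound forms, over separably closed $k$ there are none).

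The base case is where the hypothesis $\#\Gamma \in k^{\times}$ enters decisively: $V := \mathbb{G}_a(k) = (k,+)$ is a $k$-vector space, hence a $\ZZ[1/\#\Gamma]$-module since $\#\Gamma$ is invertible in $k$, so it is a uniquely divisible — in particular $\#\Gamma$-divisible and $\#\Gamma$-torsion-free — $\ZZ[\Gamma]$-module. A standard transfer (averaging) argument then shows $H^i(\Gamma, V) = 0$ for all $i \geq 1$: the composite of corestriction and restriction is multiplication by $\#\Gamma$, which is an isomorphism on $H^i(\Gamma,V)$, while it also factors through $H^i(\{1\},V) = 0$ for $i\geq 1$. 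Hence $H^1(\Gamma,\mathbb{G}_a(k)) = 0$, completing the induction and the lemma. The main obstacle in writing this carefully is the first step — producing a $\Gamma$-stable central (or at least normal) smooth connected subgroup of $N$ of strictly smaller dimension; this is where one must invoke the structure theory of unipotent groups (over a perfect — here separably closed, in fact algebraically closed after the reductions are innocuous since cohomology of $\Gamma$ with coefficients in $N(k) = N(\bar k)$ when $k$ separably closed — field, smooth connected unipotent groups are iterated extensions of $\mathbb{G}_a$'s, and the descending central series is characteristic hence $\Gamma$-stable). Once that structural input is in hand the rest is formal.
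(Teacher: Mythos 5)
Your argument is essentially the paper's: a d\'evissage along a $\Gamma$-stable central characteristic filtration with commutative vector-group-like quotients (the paper cites \cite[XVII.4.11]{SGA3} for this filtration rather than rebuilding it from the descending central series), vanishing of $H^1(\Gamma,-)$ on each graded piece because $\sharp\Gamma$ acts invertibly on its $k$-points, and a successive-approximation argument for the pro-unipotent limit. One small inaccuracy worth fixing: over a separably closed but \emph{imperfect} field (the relevant case here) $\GG_a$ does admit nontrivial (wound) forms, but this does not affect your proof, since all you need is that the points of a commutative unipotent quotient are uniquely $\sharp\Gamma$-divisible, which holds because they are $p$-power torsion in characteristic $p$ and a $\QQ$-vector space in characteristic $0$.
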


\begin{proof}
We start with the algebraic case, that is
of a smooth connected unipotent $k$--group.
According to \cite[XVII.4.11]{SGA3}, $U$ admits a central characteristic filtration 
$U=U_0 \supset U_1 \supset \dots  \supset U_n=1$
such that $U_i/U_{i+1}$ is a twisted form of a $k$--group $\GG_a^{n_i}$.
Since $U_{i+1}$ is smooth and $k$ is separably closed,
we  have   the  following exact sequence of $\Gamma$--groups
$$
1 \to U_{i+1}(k) \to U_{i}(k)   \to (U_i/U_{i+1})(k) \to 1.
$$
The multiplication by $\sharp \Gamma$ on the abelian  group $(U_i/U_{i+1})(k)$ is 
an isomorphism so that   $H^1(\Gamma, (U_i/U_{i+1})(k))=0$. The exact sequence above
shows that  the map $H^1(\Gamma, U_{i+1}(k)) \to H^1(\Gamma, U_i(k))$ is onto.
By induction it follows that $1=H^1(\Gamma, U_n(k))$ maps onto
$H^1(\Gamma, U(k))$ whence $H^1(\Gamma, U(k))=1$.

We consider now the pro-unipotent case. 
Since the $U/U_i$'s are smooth, we have that $U(k)=\limproj (U/U_i)(k)$.
Therefore by successive approximations the kernel of the map 
$$
H^1(\Gamma, U(k)) \to \limproj H^1\bigl(\Gamma, (U/U_i)(k) \bigr)
$$
is trivial. But according to the first case, the right handside is trivial thus
$H^1(\Gamma, U(k))=1$.
\end{proof}

\addvspace{\bigskipamount}

\bigskip


\begin{thebibliography}{EGA4}

\bibitem{BL}  E. Bayer-Fluckiger, H.W. Lenstra Jr., 
{\it Forms in odd degree extensions and 
self-dual normal bases}, Amer. J. Math. {\bf 112} (1990), 359--373.
 
 
\bibitem{B} J. Black, {\it Zero cycles of degree one on 
  principal homogeneous spaces}, J. Algebra {\bf 334} (2011), 232-246.


\bibitem{BAC78} N.\,Bourbaki,
{\em Alg\`ebre commutative} (Ch. 7--8), Springer--Verlag, Berlin, 2006.
  
  
\bibitem{BT1} F. Bruhat, J. Tits, {\it Groupes r\'eductifs sur un corps local. I},
 Inst. Hautes Etudes Sci. Publ. Math.  {\bf  41}  (1972), 5--251.


\bibitem{BT2} F. Bruhat, J. Tits, {\it Groupes alg\'ebriques sur un
corps local II. Existence d'une donn\'ee radicielle valu\'ee}, Pub. Math. IHES  {\bf 60}  (1984),  5--184.


\bibitem{BT3} F. Bruhat, J. Tits, {\it Groupes alg\'ebriques sur un
corps local III. Compl\'ements et application \`a la cohomologie
galoisienne}, J. Fac. Sci. Univ. Tokyo  {\bf 34}  (1987),   671--698.






\bibitem{GGMB} O. Gabber, P. Gille,  L. Moret-Bailly,
{\it Fibr\'es principaux sur les corps hens\'eliens},
Algebraic Geometry {\bf 5} (2014), 573-612. 


\bibitem{Ga2001} S. Garibaldi, {\it The Rost invariant has trivial kernel for quasi-split groups of low rank},  
Comment. Math. Helv. {\bf   76}  (2001),  684--711.


\bibitem{G1997} P. Gille, {\it
La R-\'equivalence sur les groupes  alg\'ebriques r\'eductifs d\'efinis sur un corps global}, 
Publications  Math\'ematiques de l'I.H.\'E.S. {\bf  86} (1997), 199-235.


\bibitem{G2002} P. Gille, {\it Unipotent subgroups of 
reductive groups of  characteristic  p>0}, 
Duke Math. J. {\bf 114} (2002), 307-328.


\bibitem{G2017} P. Gille, {\it Groupes alg\'ebriques semi-simples sur un 
corps de dimension cohomologique s\'eparable $\leq 2$}, 
monograph in preparation.


\bibitem{KMRT} M-A.\,Knus, A.\,Merkurjev, M.\,Rost,  J-P.\,Tignol, {\em The book of involutions},
AMS Colloq. Publ. {\bf 44}, Providence, 1998.

\bibitem{Pr1} G. Prasad, {\it A new approach to unramified
descent in Bruhat-Tits theory}, preprint (2016),  arXiv:1611.07430.


\bibitem{Pr2} G. Prasad, {\it Finite group actions on reductive
groups and tamely-ramified descent in Bruhat-Tits theory}, preprint (2017), arXiv:1705.02906.

\bibitem{SGA3} {\it S\'eminaire de G\'eom\'etrie alg\'ebrique de
l'I.H.E.S., 1963-1964, sch\'emas en groupes, dirig\'e par M.
Demazure et A. Grothendieck},  Lecture Notes in Math. 151-153.
Springer (1970).

\bibitem{Se1} J-P.\,Serre, {\em Cohomologie galoisienne}, cinqui\`eme  \'edition, 
Springer-Verlag, New York, 1997.

\bibitem{Se2} J-P.\,Serre, {\it Cohomologie galoisienne: Progr\`es et probl\`emes}, S\'eminaire Bourbaki, expos\'e 783 
(1993-94), Ast\'erisque  {\bf 227} (1995).




\bibitem{T1992} J. Tits, {\it Sur les degr\'es des extensions de corps d\'eployant les groupes
alg\'ebriques simples}, C. R. Acad. Sci. Paris S{\'e}r. I Math. \textbf{315}
(1992),  1131--1138.
 
 
 \end{thebibliography}
\end{document}